\documentclass[Alon1,singlecolor,11pt]{my-Alon}

\usepackage{amsthm,amssymb,amsmath}

\usepackage[english]{babel}
\usepackage{epigrafica}
\usepackage[LGR,OT1,T1]{fontenc} 
\usepackage{lmodern}
\usepackage{graphicx}
\usepackage{subfigure}
\usepackage{makeidx}
\usepackage{multicol}

\usepackage{changepage}
\usepackage{version}
\usepackage{url}
\usepackage{fancyhdr}

\frenchspacing
\tolerance=5000


\newtheorem{theorem}{Theorem}

\newtheorem{proposition}{Proposition}


\setcounter{chapter}{1}


\begin{document}

\pagestyle{fancy}
\fancyhead{} 
\fancyhead[RO]{\textbf{The Lotka--Volterra Dynamical System and its Discretization}}
\fancyhead[LE]{\textbf{M\'{a}rcia Lemos-Silva and Delfim F. M. Torres}}
\fancyfoot{} 
\fancyfoot[LE,RO]{}
\fancyfoot[LO,RE]{}
\fancyfoot[CO,CE]{\thepage}


\thispagestyle{empty}

\begin{center}
\huge{\bf The Lotka-Volterra Dynamical System and its Discretization}
\end{center}

\let\thefootnote\relax\footnotetext{This is a preprint of a paper
whose final form is published as Chapter~19 in the book 
``\emph{Advanced Mathematical Analysis and Its Applications}'', 
Chapman \& Hall, 2023 (ISBN:~9781032481517).}

\begin{center}	
\textbf{M\'{a}rcia Lemos-Silva}\\
\texttt{marcialemos@ua.pt}\\
Center for Research and Development in Mathematics\\ 
and Applications (CIDMA), Department of Mathematics,\\ 
University of Aveiro, 3810-193 Aveiro, Portugal\\ 
\url{https://orcid.org/0000-0001-5466-0504}
\end{center}

\begin{center}	
\textbf{Delfim F. M. Torres}\\
\texttt{delfim@ua.pt}\\
Center for Research and Development in Mathematics\\ 
and Applications (CIDMA), Department of Mathematics,\\ 
University of Aveiro, 3810-193 Aveiro, Portugal\\ 
\url{https://orcid.org/0000-0001-8641-2505}
\end{center}


\noindent \textbf{Abstract}.
Dynamical systems are a valuable asset for the study
of population dynamics. On this topic, much has been done 
since Lotka and Volterra presented the very first continuous 
system to understand how the interaction between two species 
-- the prey and the predator -- influences the growth of both populations.
The definition of time is crucial and, among options, 
one can have continuous time and discrete time. The choice of a
method to proceed with the discretization of a continuous dynamical system
is, however, essential, because the qualitative behavior of the system is expected
to be identical in both cases, despite being two different temporal spaces.
In this work, our main goal is to apply two different discretization methods
to the classical Lotka--Volterra dynamical system: the standard progressive Euler's
method and the nonstandard Mickens' method. Fixed points and their stability are
analyzed in both cases, proving that the first method leads to dynamic
inconsistency and numerical instability, while the second is capable of keeping
all the properties of the original continuous model.\\

\noindent\textbf{Keywords:} dynamical systems;\index{dynamical systems} 
Lotka--Volterra prey-predator model;\index{Lotka--Volterra model}\index{prey-predator model}
stability;\index{stability} 
Euler's discretization;\index{Euler's discretization} 
Mickens' discretization.\index{Mickens' discretization}


\section{Introduction to the Lotka--Volterra model}
\label{LST:section01}

Prey-predator equations\index{prey-predator model} intend to describe the dynamics 
of an ecological system where two species interact with each other. Alfred J. Lotka (1880--1949) 
introduced such equations in 1925 \cite{LST:lotka}; and Vito Volterra (1860-1940) studied them, 
independently \cite{LST:volterra}. For this reason, these equations are known as the Lotka--Volterra 
equations.\index{Lotka--Volterra model} On this topic, much has been
done since Lotka and Volterra presented the very first
continuous system to understand how the interaction
between two species -- the prey and the predator --
influences the growth of both populations 
\cite{LST:MR4509368,LST:MR4512320,LST:MR4509395}. 
For a review of some recent advances, we refer 
the reader to \cite{LST:review}. 

Here we consider the classical model of Lotka--Volterra, which is composed by two autonomous 
and nonlinear differential equations given by\index{Lotka--Volterra model}
\begin{equation}
\label{LST:lotka_volterra}
\begin{cases}
\dot{x} = \alpha x - \beta xy,\\
\dot{y} = -\delta y + \gamma xy,
\end{cases}
\end{equation}
where $x(t)$ and $y(t)$ represent the size at time $t$ of prey and predator populations, respectively. 
Moreover, all the parameters $\alpha$, $\beta$, $\gamma$, and $\delta$ are assumed to be positive. 

If the density of both species reach the zero value at any moment $t$, 
then they will remain there indefinitely, which represents the natural extinction of both species. 
The absence of prey leads to the extinction of predators since in that case $y(t)$ converges to 0 
when $t \rightarrow +\infty$. On the other hand, the absence of predators leads to exponential 
growth of prey, since $x(t) \rightarrow +\infty$ when $t \rightarrow +\infty$.

From an ecological point of view, population densities must always be nonnegative, 
restricting the system trajectories to $\mathbb{R}^2_+$. In fact, from the equations 
of system (\ref{LST:lotka_volterra}), we have 
\begin{equation*}
\begin{cases}
\left.\dot{x}\right|_{x = 0} = 0,\\
\left.\dot{y}\right|_{y = 0} = 0, 
\end{cases}
\end{equation*}
from which,	 according to Lemma~2 of \cite{LST:invariant}, we can conclude that the 
solution of the system is nonnegative, meaning that $\mathbb{R}^2_+$ 
is the invariant domain of the system. 

This system has equilibria at two different points: $p_1 = (0,0)$ and 
$p_2 = \left(\frac{\delta}{\gamma}, \frac{\alpha}{\beta}\right)$. To observe 
the approximate behavior of the solutions over time near these equilibrium points, 
we start by computing the Jacobian\index{Jacobian} 
matrix of the system, which is given by
\begin{equation}
\label{LST:jacobian_cont}
J(x,y) = 
\begin{pmatrix}
\alpha - \beta y & -\beta x \\ 
\gamma y & \gamma x - \delta
\end{pmatrix}.
\end{equation}

The Jacobian\index{Jacobian} matrix (\ref{LST:jacobian_cont}), 
evaluated at the equilibrium $(0,0)$, 
is given by
\begin{equation*}
J(0,0) = 
\begin{pmatrix}
\alpha & 0 \\ 
0 & - \delta
\end{pmatrix}.
\end{equation*}
The corresponding eigenvalues are $\lambda_1 = \alpha$ and $\lambda_2 = -\delta$ 
and, as $\alpha, \delta > 0$, it turns out that $(0,0)$ 
is a saddle point.\index{saddle point} In contrast, 
the Jacobian\index{Jacobian} matrix (\ref{LST:jacobian_cont}) 
evaluated at the coexistence equilibrium point $p_2$ is 
\begin{equation*}
J \left(\frac{\delta}{\gamma}, \frac{\alpha}{\beta}\right) 
= 
\begin{pmatrix}
0 & -\frac{\beta\delta}{\gamma} \\ 
\frac{\alpha\gamma}{\beta} & 0
\end{pmatrix},
\end{equation*}
for which eigenvalues are pure imaginary: $\lambda = \pm i\sqrt{\alpha\delta}$. 
This means that $p_2$ is a stable center in the linearized system. 
However, with this analysis, nothing can be concluded regarding the 
stability for the nonlinear system at this equilibrium.

To investigate the phase portrait of system (\ref{LST:lotka_volterra}), 
we start by drawing the two lines
\begin{gather*}
x = \frac{\delta}{\gamma} \quad,\quad y = \frac{\alpha}{\beta}.
\end{gather*}
By doing so, the first quadrant of the $xy$-plane is divided 
into four different regions, as shown in Figure~\ref{LST:regions}. 
\begin{figure}[ht!]
\centering
\includegraphics[scale = 0.6]{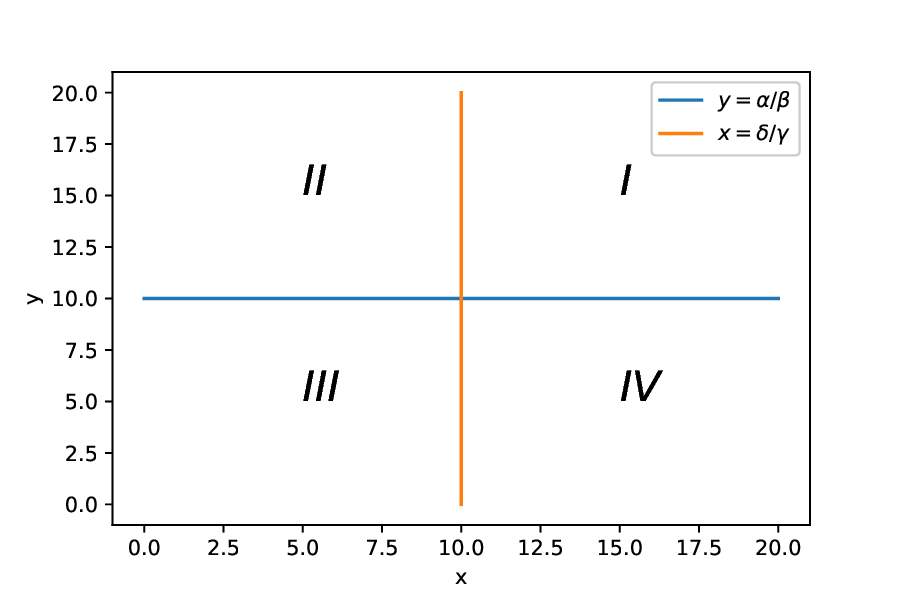} 
\caption{Regions defined by the lines $x = \frac{\delta}{\gamma}$ 
and $y = \frac{\alpha}{\beta}$, with $\alpha = 1$, 
$\beta = 0.1$, $\gamma = 0.075$, $\delta = 0.75$.}
\label{LST:regions}
\end{figure}

In each region, the signs of $\dot{x}$ and $\dot{y}$ determine 
the behavior of the solution of the system. By analyzing the 
equations of system (\ref{LST:lotka_volterra}), 
the following result holds. 

\begin{proposition}
\label{LST:trajectory_cont}
The trajectory $x$ of the system will
\begin{itemize}
\item decrease in region I and II, since $\dot{x} < 0$;
\item increase in region III and IV, since $\dot{x} > 0$.
\end{itemize}
Regarding the trajectory $y$ of the system, it will
\begin{itemize}
\item decrease in region II and III, since $\dot{y} < 0$;
\item increase in region I and IV, since $\dot{y} > 0$. 
\end{itemize}
\end{proposition}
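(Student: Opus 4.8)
The plan is to reduce the claim to an elementary sign analysis by writing the vector field of system~(\ref{LST:lotka_volterra}) in product form. Factoring the two equations as
\begin{equation*}
\dot{x} = x\left(\alpha - \beta y\right), \qquad \dot{y} = y\left(\gamma x - \delta\right),
\end{equation*}
and recalling from the discussion above that $\mathbb{R}^2_+$ is the invariant domain of the system, so that on its interior one has $x > 0$ and $y > 0$, the sign of $\dot{x}$ agrees with the sign of the linear factor $\alpha - \beta y$, and the sign of $\dot{y}$ agrees with the sign of $\gamma x - \delta$.

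Next I would observe that the two auxiliary lines $y = \alpha/\beta$ and $x = \delta/\gamma$ are precisely the zero-level sets of these two factors, i.e.\ the nullclines of $x$ and of $y$, respectively. Hence $\alpha - \beta y < 0$ exactly above the horizontal line $y = \alpha/\beta$ and $\alpha - \beta y > 0$ exactly below it; likewise $\gamma x - \delta > 0$ exactly to the right of the vertical line $x = \delta/\gamma$ and $\gamma x - \delta < 0$ exactly to its left. It then remains only to match this with the labelling of the four regions in Figure~\ref{LST:regions}: regions I and II are the two regions lying above $y = \alpha/\beta$, regions III and IV the two lying below it, regions I and IV the two lying to the right of $x = \delta/\gamma$, and regions II and III the two lying to its left. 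Reading off the combined signs region by region then yields exactly the four bulleted assertions.

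There is essentially no hard step here once the right-hand sides are put in product form; the difficulty is purely bookkeeping. The one point that deserves care is the appeal to strict positivity of $x$ and $y$: on the coordinate axes themselves $\dot{x} = 0$ or $\dot{y} = 0$, consistent with the invariance of $\mathbb{R}^2_+$ already noted, so the strict inequalities in the statement are to be read on the interior of each region. I would also make the orientation convention for the four regions explicit, so that the correspondence between region indices and the signs of the two linear factors $\alpha - \beta y$ and $\gamma x - \delta$ is unambiguous.
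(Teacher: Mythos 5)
Your proof is correct and follows essentially the same route the paper intends: the paper states the proposition after the remark ``by analyzing the equations of system~(\ref{LST:lotka_volterra})'' and refers to the literature rather than giving details, and your sign analysis of the factored right-hand sides $x(\alpha-\beta y)$ and $y(\gamma x-\delta)$, combined with positivity of $x,y$ and the correct identification of the four regions relative to the nullclines $y=\alpha/\beta$ and $x=\delta/\gamma$, is exactly that analysis, matching the region labelling used later in the paper's Euler and Mickens arguments.
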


Proposition~\ref{LST:trajectory_cont} suggests that the curve of the system 
in the phase plane will be counterclockwise around the equilibrium point $p_2$, 
but that it is not enough to conclude whether the trajectory spiral\index{spiral} 
towards $p_2$; spiral out, towards infinity; or it is a closed curve. 
Despite this, it has already been explained that this equilibrium is, in fact, 
a center in the nonlinear system, meaning the trajectories 
will be closed curves. This allows to write the following result. 

\begin{proposition}
\label{LST:closed}	
Except for those beginning at the equilibrium $p_2$ or at a coordinate axes,
every trajectory of the system is a closed orbit\index{orbit} 
that turns counterclockwise around the equilibrium point $p_2$. 
\end{proposition}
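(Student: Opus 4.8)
The plan is to exhibit a first integral (constant of motion) for system~(\ref{LST:lotka_volterra}) and to show that, in the open first quadrant, its level sets are exactly simple closed curves encircling $p_2$. First I would eliminate time by dividing the two equations, obtaining the separable relation
\begin{equation*}
\frac{dy}{dx} = \frac{y(\gamma x - \delta)}{x(\alpha - \beta y)},
\end{equation*}
valid where $x(\alpha-\beta y)\neq 0$, and then integrate $\frac{\alpha-\beta y}{y}\,dy = \frac{\gamma x-\delta}{x}\,dx$ to conclude that
\begin{equation*}
H(x,y) = \gamma x - \delta\ln x + \beta y - \alpha\ln y
\end{equation*}
is constant along every trajectory contained in $\mathbb{R}^2_+$; one can also verify $\dot H=0$ directly from~(\ref{LST:lotka_volterra}). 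The key structural fact is that $H$ is separable, $H(x,y)=g(x)+h(y)$ with $g(x)=\gamma x-\delta\ln x$ and $h(y)=\beta y-\alpha\ln y$, and each summand is strictly convex on $(0,+\infty)$, tends to $+\infty$ as its argument approaches $0^+$ or $+\infty$, and attains its unique minimum at $\delta/\gamma$, respectively $\alpha/\beta$. Hence $H$ has a strict global minimum at $p_2$ and $H\to+\infty$ at the boundary of the first quadrant and at infinity.

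Next I would study the level set $\mathcal{C}_c=\{(x,y)\in\mathbb{R}^2_+ : H(x,y)=c\}$ for $c>H(p_2)$. Writing $h_{\min}=h(\alpha/\beta)$, strict convexity of $g$ shows that $g(x)\le c-h_{\min}$ defines a compact interval $[x_-,x_+]$ with $\delta/\gamma$ in its interior; for each interior $x$ the equation $h(y)=c-g(x)$ has exactly two solutions $y_-(x)<\alpha/\beta<y_+(x)$, and these coincide at $x=x_\pm$. This parametrization exhibits $\mathcal{C}_c$ as a single simple closed curve. Alternatively, since $\nabla H=(\gamma-\delta/x,\ \beta-\alpha/y)$ vanishes only at $p_2$, each $\mathcal{C}_c$ with $c>H(p_2)$ is a compact one-dimensional submanifold, hence a finite disjoint union of circles, and the convexity/separability argument rules out more than one component. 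Because $H\to+\infty$ on $\partial\mathbb{R}^2_+$, the curve $\mathcal{C}_c$ stays in the open quadrant and encloses $p_2$.

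Finally, take any initial condition $q\in\mathbb{R}^2_+$ with $q\neq p_2$. Its trajectory remains in $\mathbb{R}^2_+$ by the invariance already established, and it remains on the compact curve $\mathcal{C}_{H(q)}$, on which the vector field of~(\ref{LST:lotka_volterra}) never vanishes (its only zeros are the equilibria, and $p_1\notin\mathbb{R}^2_+$ while $p_2\notin\mathcal{C}_{H(q)}$). A flow on a circle without rest points is periodic, so the trajectory is a closed orbit that coincides with $\mathcal{C}_{H(q)}$; for a point on a coordinate axis, invariance of the axes gives the non-closed behavior already noted after~(\ref{LST:lotka_volterra}), so these are the only exceptions besides $p_2$ itself. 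The counterclockwise orientation then follows from the sign analysis of $\dot x$ and $\dot y$ in the four regions recorded in Proposition~\ref{LST:trajectory_cont}. The main obstacle is the rigorous verification that each level set $\mathcal{C}_c$ is connected — a single simple closed curve rather than several nested ones — which is exactly where the strict convexity and separable structure of $H$ do the essential work; once that is in hand, periodicity and orientation are routine.
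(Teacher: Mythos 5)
Your proposal is correct, but note that the paper itself contains no proof of this proposition: it only remarks (somewhat loosely) that $p_2$ is a center for the nonlinear system and refers the reader to \cite{LST:lotka:theorems} for both Propositions~\ref{LST:trajectory_cont} and \ref{LST:closed}. What you have written is essentially the classical argument from that reference, reconstructed in full: the first integral $H(x,y)=\gamma x-\delta\ln x+\beta y-\alpha\ln y$ is indeed constant along solutions of \eqref{LST:lotka_volterra}, its separable structure with strictly convex summands $g(x)=\gamma x-\delta\ln x$ and $h(y)=\beta y-\alpha\ln y$ gives a strict global minimum at $p_2$ and blow-up of $H$ at the boundary of the quadrant and at infinity, and your two-branch parametrization $y_{\pm}(x)$ over $[x_-,x_+]$ is exactly what settles the one genuinely delicate point, namely that each level set with $c>H(p_2)$ is a single simple closed curve rather than several components. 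Two small remarks, neither a gap: confinement to the compact level curve also yields global existence in time (worth saying explicitly, since periodicity presupposes the solution is defined long enough to return), and the step ``a flow on a compact curve free of rest points is periodic'' deserves the one-line justification that the speed is bounded below on the compact curve, so the orbit traverses it in finite time. With those spelled out, your argument is a complete, self-contained proof of a statement the paper only quotes, and the orientation claim correctly reduces to the sign analysis of Proposition~\ref{LST:trajectory_cont}.
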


The behavior described by Proposition~\ref{LST:closed}	
can be seen in Figure~\ref{LST:phase}, 
for several different initial conditions.

\begin{figure}[!ht]
\centering
\includegraphics[scale = 0.4]{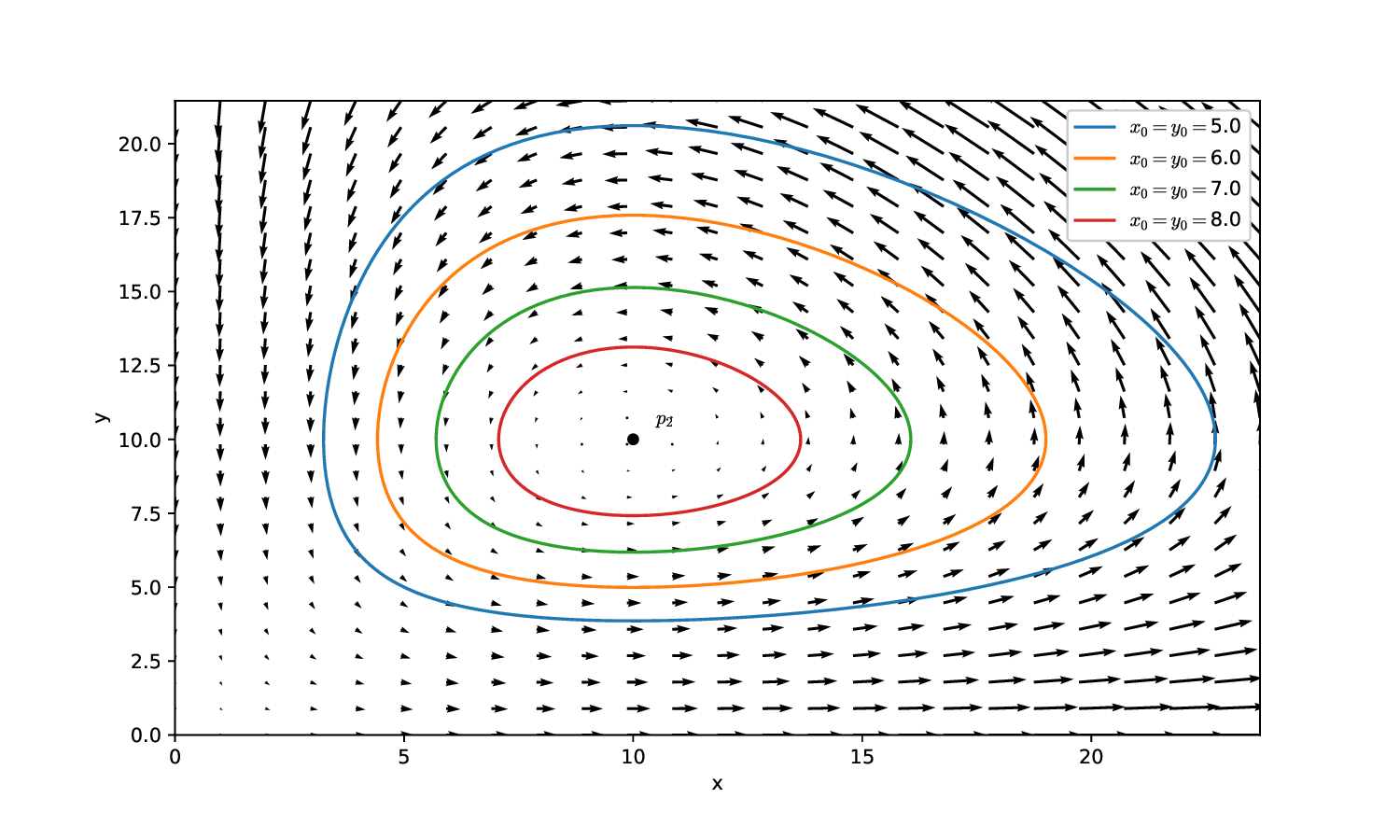} 
\caption{Phase portrait of system (\ref{LST:lotka_volterra}) with $\alpha = 1$, 
$\beta = 0.1$, $\gamma = 0.075$, and $\delta = 0.75$.}
\label{LST:phase}
\end{figure}

From Proposition~\ref{LST:closed}, it comes directly that the densities 
of predators and prey will oscillate periodically,\index{periodic} 
as can be seen in Figure~\ref{LST:oscillations}, with the amplitude 
and frequency of oscillations\index{oscillations} depending only 
on the considered initial conditions.

\begin{figure}[!ht]
\centering
\includegraphics[scale=0.45]{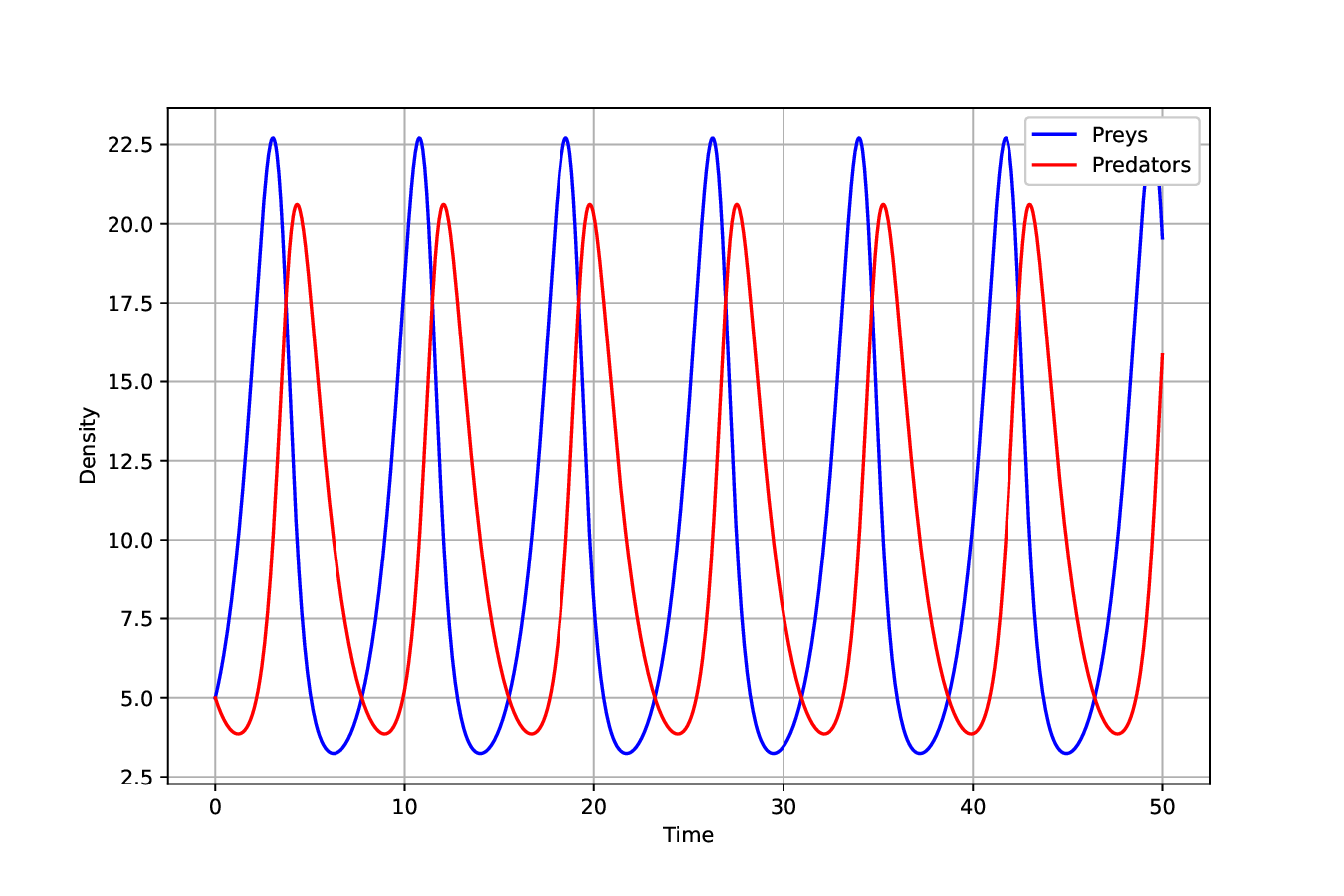} 
\caption{Oscillations of prey and predator densities for system (\ref{LST:lotka_volterra}) 
with $\alpha = 1$, $\beta = 0.1$, $\gamma = 0.075$, and $\delta = 0.75$.}
\label{LST:oscillations}
\end{figure}

All the results described so far are well-known.
In particular, both Propositions~\ref{LST:trajectory_cont} 
and \ref{LST:closed} can be found, e.g., in \cite{LST:lotka:theorems}. 
In Sections~\ref{LST:section02} and \ref{LST:section03} 
we provide new insights.
 

\section{Discretization by Euler's Method}
\label{LST:section02}

There are several methods for converting continuous systems 
into discrete counterparts. The most conventional way 
to do so is to implement a standard difference scheme,
the most classical one being the progressive 
Euler's method.\index{Euler's discretization}  
However, it is known that this method can raise several problems 
such as lack of dynamical consistency, even when applied 
to the simplest systems \cite{LST:consistent}. 
A discrete-time model is said to be dynamically consistent\index{dynamically consistent} 
with its continuous analog if they both exhibit the same dynamical behavior, 
namely the stability behavior of fixed points, bifurcation, and chaos. 
In \cite{LST:mickens:book}, Mickens points out that the fundamental reason 
for the existence of numerical instabilities is that discrete models 
have a larger parameter space than the corresponding differential equations: 
one has the step size $h$ as an additional parameter. Nevertheless, this step $h$
is, obviously, inherent to any discretization and any discrete dynamical system. 
Therefore, it is crucial to consider a numerical method that is able to overcome 
this setback during discretization.

Here we prove that Euler's method applied to the Lotka--Volterra 
model\index{Lotka--Volterra model} brings a discrete system 
that is not dynamically consistent with its continuous counterpart. 

Applying the progressive Euler's method\index{Euler's discretization} 
to both equations of system (\ref{LST:lotka_volterra}), we obtain that
\begin{equation}
\label{LST:euler}
\begin{cases}
x_{i+1} = x_i + h(\alpha x_i - \beta x_iy_i),\\
y_{i+1} = y_i + h(\gamma x_i y_i - \delta y_i),
\end{cases}
\end{equation}
where $h$ denotes the step size and $x_i$ and $y_i$ define the density 
of the prey and predators' populations, respectively, at time $i$. 

The fixed points\index{fixed point} of system (\ref{LST:euler}) are $p_1 = (0,0)$ 
and $p_2 = \left(\frac{\delta}{\gamma}, \frac{\alpha}{\beta}\right)$. 
To determine the nature of the fixed points,\index{fixed point} one must compute the 
Jacobian\index{Jacobian} matrix $J(x,y)$ of system (\ref{LST:euler}). 
This matrix is given by 
\begin{equation}
\label{LST:jacobian_euler}
J (x,y) = 
\begin{pmatrix}
-\beta hy + \alpha h + 1 & -\beta hx \\
\gamma hy & \gamma hx - \delta h + 1
\end{pmatrix}.
\end{equation}

Follows our first result.

\begin{theorem}
\label{LST:thm:01}
The fixed point\index{fixed point} $(0,0)$ of system \eqref{LST:euler} is 
\begin{itemize}
\item a saddle point\index{saddle point} if $h \in \left]0, \frac{2}{\delta}\right[$;
\item a source if $h \in \left]\frac{2}{\delta}, +\infty\right[$.
\end{itemize}
\end{theorem}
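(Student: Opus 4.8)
The plan is to evaluate the Jacobian matrix \eqref{LST:jacobian_euler} at the fixed point $(0,0)$, read off its eigenvalues, and then invoke the discrete-time linearization criterion: a hyperbolic fixed point of a map is a sink, a source, or a saddle according to whether all, none, or only some of the eigenvalues of the linearization lie strictly inside the unit circle. This is the appropriate analog of the sign-of-the-real-part test used for the continuous system \eqref{LST:lotka_volterra}.

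First I would substitute $x = y = 0$ into \eqref{LST:jacobian_euler}, which immediately produces the diagonal matrix
\begin{equation*}
J(0,0) =
\begin{pmatrix}
1 + \alpha h & 0 \\
0 & 1 - \delta h
\end{pmatrix},
\end{equation*}
so that the eigenvalues are simply $\lambda_1 = 1 + \alpha h$ and $\lambda_2 = 1 - \delta h$. Since $\alpha$, $\delta$, and $h$ are all positive, $\lambda_1 > 1$ for every admissible step size, hence $|\lambda_1| > 1$ unconditionally; the entire classification therefore hinges on the modulus of $\lambda_2$.

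Next I would analyze $|\lambda_2| = |1 - \delta h|$. A one-line computation gives $|1 - \delta h| < 1 \iff 0 < \delta h < 2 \iff 0 < h < \tfrac{2}{\delta}$, whereas $|1 - \delta h| > 1 \iff \delta h > 2 \iff h > \tfrac{2}{\delta}$. Combining this with $|\lambda_1| > 1$: for $h \in \left]0, \tfrac{2}{\delta}\right[$ exactly one eigenvalue lies outside the unit circle and one inside, so $(0,0)$ is a saddle point; for $h \in \left]\tfrac{2}{\delta}, +\infty\right[$ both eigenvalues lie outside the unit circle, so $(0,0)$ is a source. This exhausts the two cases in the statement.

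There is no real obstacle in this argument; the only points that require care are (i) using the unit-circle criterion appropriate to discrete dynamical systems rather than the half-plane criterion valid for the continuous model, and (ii) observing that the threshold $h = \tfrac{2}{\delta}$ is legitimately omitted, because there $|\lambda_2| = 1$, the fixed point is non-hyperbolic, and linearization yields no conclusion — which is precisely why the statement restricts to the two open intervals.
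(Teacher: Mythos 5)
Your proof is correct and follows essentially the same route as the paper: evaluate the Jacobian \eqref{LST:jacobian_euler} at $(0,0)$, note that $\lambda = 1+\alpha h$ always has modulus greater than one, and classify the fixed point by whether $\lvert 1-\delta h\rvert$ is less than or greater than one. Your added remark on the non-hyperbolic threshold $h = \tfrac{2}{\delta}$ is a small bonus the paper does not spell out, but the argument is otherwise the same.
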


\begin{proof}
The Jacobian\index{Jacobian} matrix (\ref{LST:jacobian_euler}) 
evaluated at the fixed point\index{fixed point} $(0,0)$ is
\begin{equation*}
J(0,0) = 
\begin{pmatrix}
\alpha h + 1 & 0 \\ 
0 & -\delta h + 1
\end{pmatrix},
\end{equation*}
whose eigenvalues are $\lambda_1 = -\delta h + 1$ and $\lambda_2 = \alpha h + 1$. 
As all the parameters are positive, one can easily conclude that 
$\lvert \lambda_2 \rvert > 1$. On the other hand, $\lvert\lambda_1\rvert$ 
can either be greater or less that one. In particular, 
\begin{equation*}
\begin{aligned}
\lvert \lambda_1 \rvert < 1 
&\Leftrightarrow -\delta h + 1 < 1 \wedge -\delta h + 1 > -1\\ 
&\Leftrightarrow h > 0 \wedge h < \frac{2}{\delta},
\end{aligned}
\end{equation*}
and 
\begin{equation*}
\begin{aligned}
\lvert \lambda_1 \rvert > 1 
&\Leftrightarrow -\delta h + 1 > 1 \vee -\delta h + 1 < -1\\ 
&\Leftrightarrow h < 0 \vee h > \frac{2}{\delta}.
\end{aligned}
\end{equation*}
Therefore, the fixed point\index{fixed point} $(0,0)$ 
is a saddle point\index{saddle point} if 
$h \in \left]0, \frac{2}{\delta}\right[$ or a source if 
$h \in \left]\frac{2}{\delta}, +\infty\right[$. As $h$ 
is strictly positive, the condition $h < 0$ is not considered. 
\end{proof}

For both possibilities of Theorem~\ref{LST:thm:01}, 
the point $p_1 = (0,0)$ is unstable, which brings no major 
changes to what is obtained in the continuous case,
as described in Section~\ref{LST:section01}. We now
study what happens with the second fixed point $p_2$.

\begin{theorem}
\label{LST:thm:uf}
The fixed point\index{fixed point} $\left(\frac{\delta}{\gamma}, 
\frac{\alpha}{\beta}\right)$ is an unstable focus. 
\end{theorem}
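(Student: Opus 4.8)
The plan is to redo, at the coexistence fixed point, the linearization argument already carried out for $(0,0)$ in Theorem~\ref{LST:thm:01}, but now keeping track of the modulus of a complex conjugate pair of eigenvalues rather than of two real ones. First I would substitute $x = \frac{\delta}{\gamma}$ and $y = \frac{\alpha}{\beta}$ into the Jacobian matrix \eqref{LST:jacobian_euler}. The two diagonal entries collapse to $1$, since $-\beta h\cdot\frac{\alpha}{\beta}+\alpha h = 0$ and $\gamma h\cdot\frac{\delta}{\gamma}-\delta h = 0$, so that
\begin{equation*}
J\!\left(\frac{\delta}{\gamma},\frac{\alpha}{\beta}\right)
=
\begin{pmatrix}
1 & -\frac{\beta\delta}{\gamma}h\\
\frac{\alpha\gamma}{\beta}h & 1
\end{pmatrix}.
\end{equation*}

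Next I would read off the trace $T = 2$ and the determinant $D = 1 + \alpha\delta h^{2}$, so that the characteristic polynomial is $\lambda^{2} - 2\lambda + \bigl(1+\alpha\delta h^{2}\bigr) = 0$, i.e.\ $(\lambda-1)^2 = -\alpha\delta h^2$. Its discriminant equals $-4\alpha\delta h^{2}$, which is strictly negative for every step $h>0$ because all the parameters are positive; hence the eigenvalues form a genuine complex conjugate pair $\lambda_{\pm} = 1 \pm i\sqrt{\alpha\delta}\,h$. This already forces the orbits of the linearization to rotate about $p_2$, so the fixed point is a focus and not a node. To settle stability I would then compute the modulus: $\lvert\lambda_{\pm}\rvert^{2} = 1 + \alpha\delta h^{2} > 1$, so $\lvert\lambda_{\pm}\rvert > 1$ for all $h>0$. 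A complex conjugate pair of eigenvalues of the linearization lying strictly outside the unit circle classifies $p_2$ as an unstable focus (a spiral source), which completes the argument.

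There is no real obstacle here: the whole proof is a short computation. The point worth stressing — and what makes the statement informative rather than routine — is that, unlike in Theorem~\ref{LST:thm:01}, the conclusion does not depend on $h$: no matter how small the step is chosen, $\lvert\lambda_{\pm}\rvert$ exceeds $1$, so the Euler discretization \eqref{LST:euler} can never recover the center exhibited by the continuous system \eqref{LST:lotka_volterra} at $p_2$. If one wished to be fully rigorous about calling the nonlinear map a ``focus'' near $p_2$, one could additionally invoke the discrete Hartman--Grobman theorem, which applies precisely because $\lvert\lambda_{\pm}\rvert \neq 1$; for the purposes of this chapter, however, the linear classification is enough, and it is exactly the mismatch with the continuous behavior that motivates the nonstandard scheme studied in Section~\ref{LST:section03}.
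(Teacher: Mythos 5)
Your proposal is correct and follows essentially the same route as the paper: evaluate the Jacobian \eqref{LST:jacobian_euler} at $p_2$, observe the eigenvalues are the complex conjugate pair $1 \pm i\sqrt{\alpha\delta}\,h$, and conclude from $\lvert\lambda\rvert = \sqrt{1+\alpha\delta h^2} > 1$ that the fixed point is an unstable focus. Your write-up is in fact slightly more explicit than the paper's (trace/determinant computation, the modulus spelled out, and the imaginary unit correctly included, which the paper's displayed eigenvalues omit), but the argument is the same.
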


\begin{proof}
The Jacobian\index{Jacobian} matrix (\ref{LST:jacobian_euler}) 
evaluated at the fixed point\index{fixed point} $\left(\frac{\delta}{\gamma}, 
\frac{\alpha}{\beta}\right)$ is given by 
\begin{equation*}
J\left(\frac{\delta}{\gamma}, \frac{\alpha}{\beta}\right) 
= \begin{pmatrix}
1 & -\frac{\beta\delta h}{\gamma} \\ \frac{\alpha\gamma h}{\beta} & 1
\end{pmatrix},
\end{equation*}
whose eigenvalues are the complex conjugates $\lambda = 1 \pm \sqrt{\alpha\delta}h$. 
As $\alpha$, $\delta$, and $h$ are strictly positive, 
it is clear that $\lvert \lambda \rvert > 1$, 
meaning that the fixed point\index{fixed point} $\left(\frac{\delta}{\gamma}, 
\frac{\alpha}{\beta}\right)$ is an unstable focus. 
\end{proof}

Theorem~\ref{LST:thm:uf} asserts that the orbits\index{orbit} of system 
(\ref{LST:euler}) near the fixed point\index{fixed point} $p_2$ will not be closed, 
but spirals\index{spiral} that spiral out toward infinity. 

Through a simple analysis of the system equations, taking into consideration 
the four regions defined in Figure~\ref{LST:regions}, it is possible to 
understand the direction of the solution in those regions.

\begin{theorem}
\label{LST:trajectory_euler}	
The trajectory $x$ of system (\ref{LST:euler}) will
\begin{itemize}
\item decrease in region I and II, i.e., $x_{i+1} < x_i$;
\item increase in region III and IV, i.e., $x_{i+1} > x_i$. 
\end{itemize}
Regarding the trajectory $y$ of system (\ref{LST:euler}), it will
\begin{itemize}
\item decrease in region II and III, i.e., $y_{i+1} < y_i$;
\item increase in region I and IV, i.e., $y_{i+1} > y_i$. 
\end{itemize}
\end{theorem}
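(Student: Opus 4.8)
The plan is to mimic exactly the sign analysis carried out for the continuous system in Proposition~\ref{LST:trajectory_cont}, but now working with the increments $x_{i+1}-x_i$ and $y_{i+1}-y_i$ instead of the derivatives $\dot{x}$ and $\dot{y}$. From system~\eqref{LST:euler} we read off directly that
\begin{equation*}
x_{i+1}-x_i = h\,x_i(\alpha-\beta y_i), \qquad y_{i+1}-y_i = h\,y_i(\gamma x_i-\delta).
\end{equation*}
Since $h>0$ and, by the invariance of $\mathbb{R}^2_+$ discussed in Section~\ref{LST:section01}, the iterates satisfy $x_i>0$ and $y_i>0$ on any orbit not lying on a coordinate axis, the sign of $x_{i+1}-x_i$ is the sign of $\alpha-\beta y_i$, and the sign of $y_{i+1}-y_i$ is the sign of $\gamma x_i-\delta$.

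The remaining step is purely bookkeeping with the four regions of Figure~\ref{LST:regions}. First I would recall how the two lines $x=\tfrac{\delta}{\gamma}$ and $y=\tfrac{\alpha}{\beta}$ partition the first quadrant: regions I and II are those with $y_i>\tfrac{\alpha}{\beta}$ (so $\alpha-\beta y_i<0$), regions III and IV are those with $y_i<\tfrac{\alpha}{\beta}$ (so $\alpha-\beta y_i>0$); likewise regions II and III are those with $x_i<\tfrac{\delta}{\gamma}$ (so $\gamma x_i-\delta<0$), while regions I and IV are those with $x_i>\tfrac{\delta}{\gamma}$ (so $\gamma x_i-\delta>0$). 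Substituting these sign conditions into the two displayed increment formulas yields each of the four bullet points verbatim: $x_{i+1}<x_i$ in I and II, $x_{i+1}>x_i$ in III and IV, $y_{i+1}<y_i$ in II and III, and $y_{i+1}>y_i$ in I and IV.

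There is no real obstacle here; the statement is an immediate corollary of the structure of the Euler map, and the only thing one must be slightly careful about is the hypothesis that the iterates stay strictly positive — this is exactly what lets one divide through by $x_i$ and $y_i$ (or rather, read off the sign of the product). One should note in passing that, unlike the continuous case, positivity of the discrete iterates is not automatic for arbitrary $h$: a large step can push $y_{i+1}$ negative. For the purpose of this theorem, however, the claim is understood region by region for iterates already known to be positive, so the argument goes through as stated, and the conclusion precisely parallels Proposition~\ref{LST:trajectory_cont} — the discrete orbits turn counterclockwise around $p_2$, consistent with the spiralling-out behaviour established in Theorem~\ref{LST:thm:uf}.
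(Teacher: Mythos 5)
Your proposal is correct and follows essentially the same route as the paper's own proof: the paper likewise observes that in regions I and II one has $y_i > \frac{\alpha}{\beta}$, so $h(\alpha x_i - \beta x_i y_i) < 0$ and hence $x_{i+1} < x_i$, with the opposite sign in regions III and IV, and treats $y$ analogously via the second equation. Your explicit factorization $x_{i+1}-x_i = h\,x_i(\alpha-\beta y_i)$ and your remark on positivity of the iterates are just slightly more detailed renderings of the same sign analysis.
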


\begin{proof}
We start by analyzing the trajectory of $x$ by looking 
to the first equation of system (\ref{LST:euler}). In 
regions I and II, $y_i > \frac{\alpha}{\beta}$. 
This means that 
\begin{equation*}
h(\alpha x_i - \beta x_iy_i) < 0,
\end{equation*}
and
\begin{equation*}
x_i + h(\alpha x_i - \beta x_iy_i) < x_i 
\Rightarrow x_{i+1} < x_i.
\end{equation*}	
On the other hand, in regions III and IV, 
we have $y_i < \frac{\alpha}{\beta}$. In this case,
\begin{equation*}
h(\alpha x_i - \beta x_iy_i) > 0,
\end{equation*}
from which we can conclude that
\begin{equation*}
x_i + h(\alpha x_i - \beta x_iy_i) > x_i 
\Rightarrow x_{i+1} > x_i.
\end{equation*}
Through the second equation of the system, by an analogous reasoning, 
the intended conclusions are obtained for $y$.
\end{proof}

Theorem~\ref{LST:trajectory_euler} implies
a counterclockwise displacement of the system. 
Precisely, it follows directly from 
Theorem~\ref{LST:trajectory_euler} that a trajectory 
of (\ref{LST:euler}) near the fixed point\index{fixed point} $p_2$ 
will spiral\index{spiral} in a counterclockwise direction, 
as can be seen in Figure~\ref{LST:diagrama_euler}.

\begin{figure}[ht!]
\centering
\includegraphics[scale = 0.55]{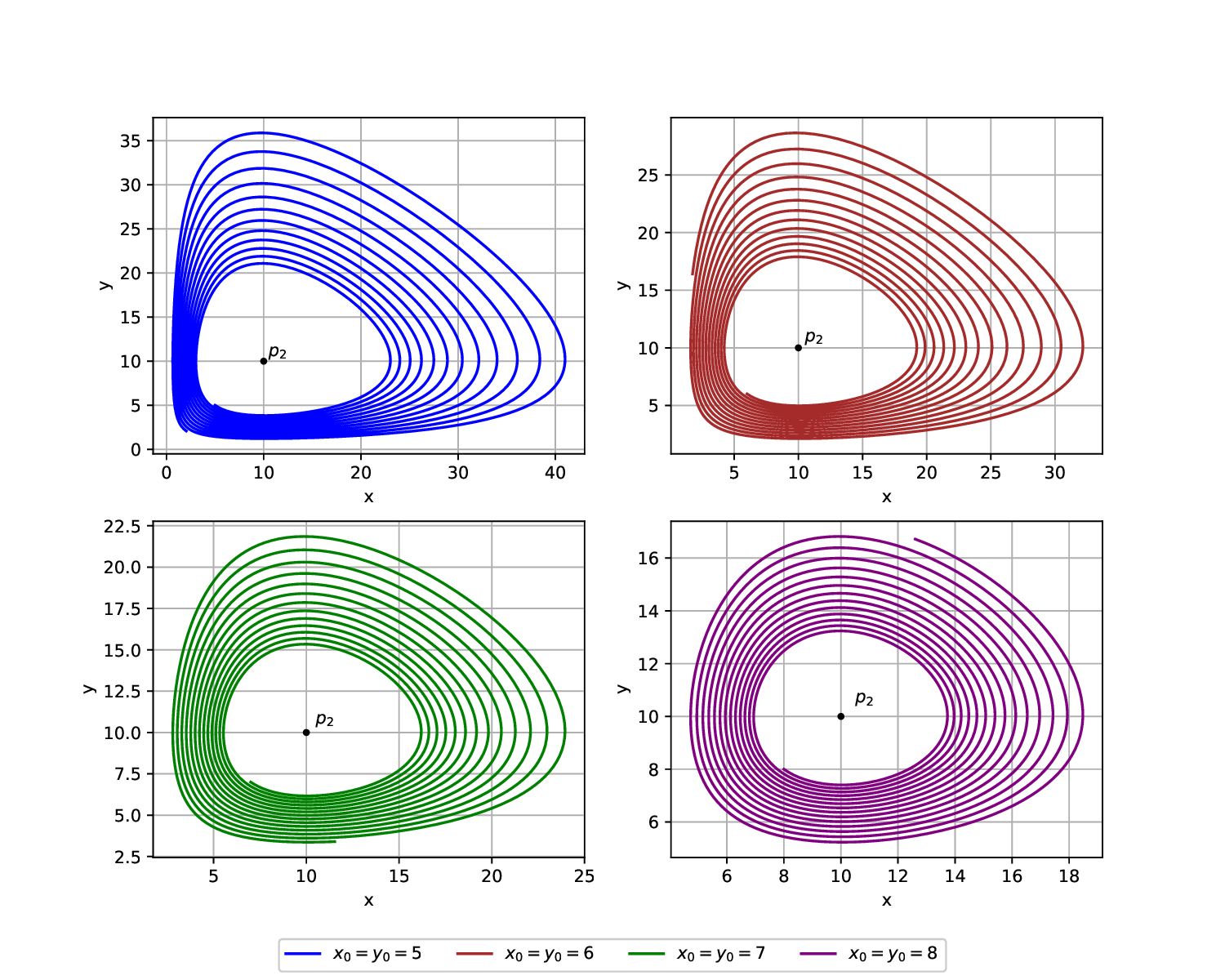} 
\caption{Trajectory of the system near the fixed point\index{fixed point} $p_2$ with 
$\alpha = 1$, $\beta = 0.1$, $\gamma = 0.075$, $\delta = 0.75$, and $h = 0.02$.}
\label{LST:diagrama_euler}
\end{figure}

Since the trajectories of system (\ref{LST:euler}) are not closed curves, 
they cease to be periodic\index{periodic} orbits.\index{orbit} 
Here, the trajectories are expansive, not converging to a particular 
fixed point, and the amplitude of the curves does not remain constant. 
In this case, the amplitude increases over time: see
Figure~\ref{LST:oscilacoes_euler}.

\begin{figure}[ht!]
\centering
\includegraphics[scale = 0.45]{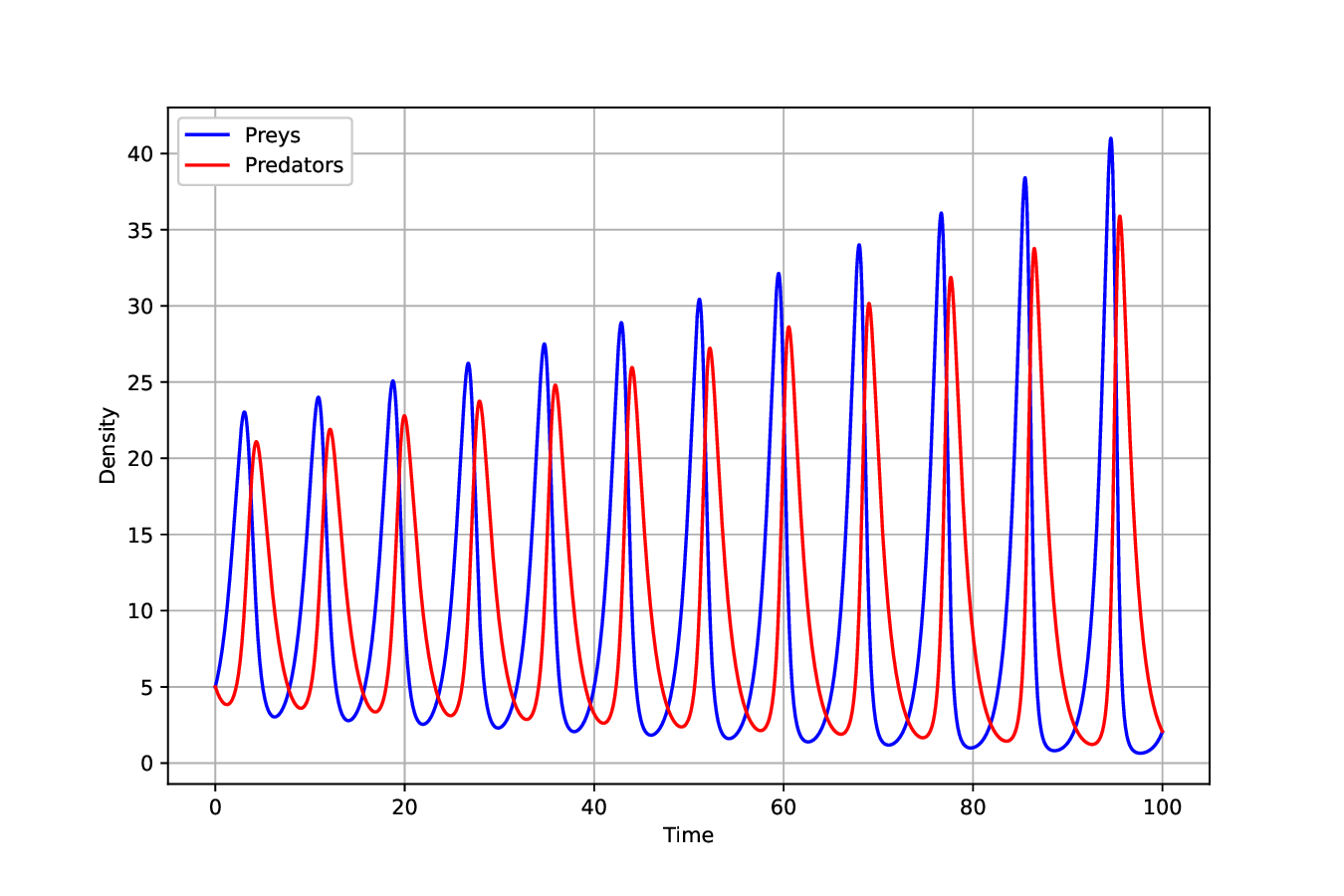} 
\caption{Oscillations of prey and predator densities for system (\ref{LST:euler}) 
with $\alpha = 1$, $\beta = 0.1$, $\gamma = 0.075$, $\delta = 0.75$, 
$h = 0.02$, and $x_0 = y_0 = 5$.}
\label{LST:oscilacoes_euler}
\end{figure}

In addition to the aforementioned dynamic inconsistency, caused by the progressive 
Euler method,\index{Euler's discretization} the considered discrete system 
also makes it possible to predict negative population densities, 
even when all parameters and initial conditions are taken positive. Furthermore, 
it is also possible to prove that, under some circumstances, 
negative solutions can return to positive values. 
Although mathematically possible, these two possibilities do not make any sense, 
neither in the context of the problem (the problem is defined only in $\mathbb{R}^2_+$) 
nor from an ecological point of view. However, as we shall prove analytically, 
and geometrically, under Euler's method both mentioned situations are indeed possible.
This shows the inconsistency of the discrete-time system (\ref{LST:euler}). Next
we study such situations in detail.

According to the orientation of the solution pointed out in Theorem~\ref{LST:trajectory_euler}, 
system (\ref{LST:euler}) can only predict negative solutions in two different cases and, 
in each of them, for only one of the variables.
\begin{enumerate}
\item Let $(x_i,y_i)$ be a point in region II of Figure~\ref{LST:regions}. 
Here we have $y_i > \frac{\alpha}{\beta}$, $h(\alpha x_i - \beta x_iy_i) < 0$, 
and $x_{i+1} < x_i$. From the first equation of system (\ref{LST:euler}), 
$x_{i+1}$ can assume a negative value if
\begin{equation*}
x_i < -h(\alpha x_i - \beta x_iy_i).
\end{equation*}	
Now, two situations can also occur. When the trajectory crosses the positive 
semi-axis $yy$, predicting a $x_{i+1} < 0$, this intersection can happen 
in such a way that $y_i$ remains greater than $\frac{\alpha}{\beta}$ 
or $y_i$ becomes less than that same value. We now note that, with $x_i < 0$, 
the first equation of the system can be rewritten as
\begin{equation*}
x_{i+1} = -x_i + h(-\alpha x_i + \beta x_iy_i) 
\quad \text{with} \quad x_i,y_i > 0.
\end{equation*}
\begin{itemize}
\item  If $y_i > \frac{\alpha}{\beta}$, then
we have $h(-\alpha x_i + \beta x_iy_i) > 0$.
Thus, $x_{i+1}$ can assume a positive value, as long as 
$h(-\alpha x_i + \beta x_iy_i) > x_i$. If this happens, 
the system enters region III of Figure~\ref{LST:regions}. 
Otherwise, the system goes outside the four mentioned regions, 
resulting in negative values for prey density.
\item If $y_i < \frac{\alpha}{\beta}$, 
then $h(\alpha x_i - \beta x_iy_i) < 0$, which leads to
\begin{equation*}
-x_i + h(-\alpha x_i + \beta x_i y_i) < 0 
\Rightarrow x_{i+1} < 0,
\end{equation*}
meaning that the system will go outside the 
four admissible regions.
\end{itemize}
\item Let $(x_i,y_i)$ be a point in region III of the 
Figure~\ref{LST:regions}. Here $x_i < \frac{\delta}{\gamma}$, 
$h(\gamma x_iy_i - \delta y_i) < 0$, and $y_{i+1} < y_i$. According 
to the second equation of system (\ref{LST:euler}), 
$y_{i+1}$ can assume a negative value if
\begin{equation*}
y_i < -h(\gamma x_iy_i - \delta y_i).
\end{equation*}
When the trajectory of the system crosses the positive semi-axis $xx$, 
obtaining $y_{i+1} < 0$, one can continue to have 
$x_i < \frac{\delta}{\gamma}$ or there can be a change 
in its value such that $x_i > \frac{\delta}{\gamma}$. Rewriting 
the second equation of system (\ref{LST:euler}), 
knowing that now $y_i < 0$, we obtain
\begin{equation*}
y_{i+1} = -y_i + h(-\gamma x_iy_i + \delta y_i) 
\quad \text{with} \quad x_i,y_i > 0.
\end{equation*}
\begin{itemize}
\item If $x_i < \frac{\delta}{\gamma}$, 
then $h(\gamma x_iy_i - \delta y_i) > 0$. Consequently, 
$y_{i+1}$ may be positive as long as $h(\gamma x_iy_i - \delta y_i) > y_i$. 
In this case, the trajectory of the system will enter in region IV. 
Otherwise, it will remain outside the four regions under study, 
with negative values for the density of predators.
\item If, on the other hand, $x_i > \frac{\delta}{\gamma}$, 
then $h(\gamma x_iy_i - \delta y_i) < 0$. Thus,
\begin{equation*}
-y_i + h(-\gamma x_iy_i + \delta y_i) < 0 
\Rightarrow y_{i+1} < 0,
\end{equation*}
which means that the system obtained by Euler's
method gives negative values for $y_{i+1}$, 
with values outside the four admissible regions.
\end{itemize} 
\end{enumerate}

By way of example, changing the value of $h$ from 0.02 to 0.03, it is possible 
to observe negative values for the variable $x_i$, as  
seen in Figure~\ref{LST:oscilacoes_euler_negativo}. In addition, 
it is verified that, after some time, the solutions that were previously 
negative return to positive values.

\begin{figure}[!ht]
\centering
\includegraphics[scale = 0.45]{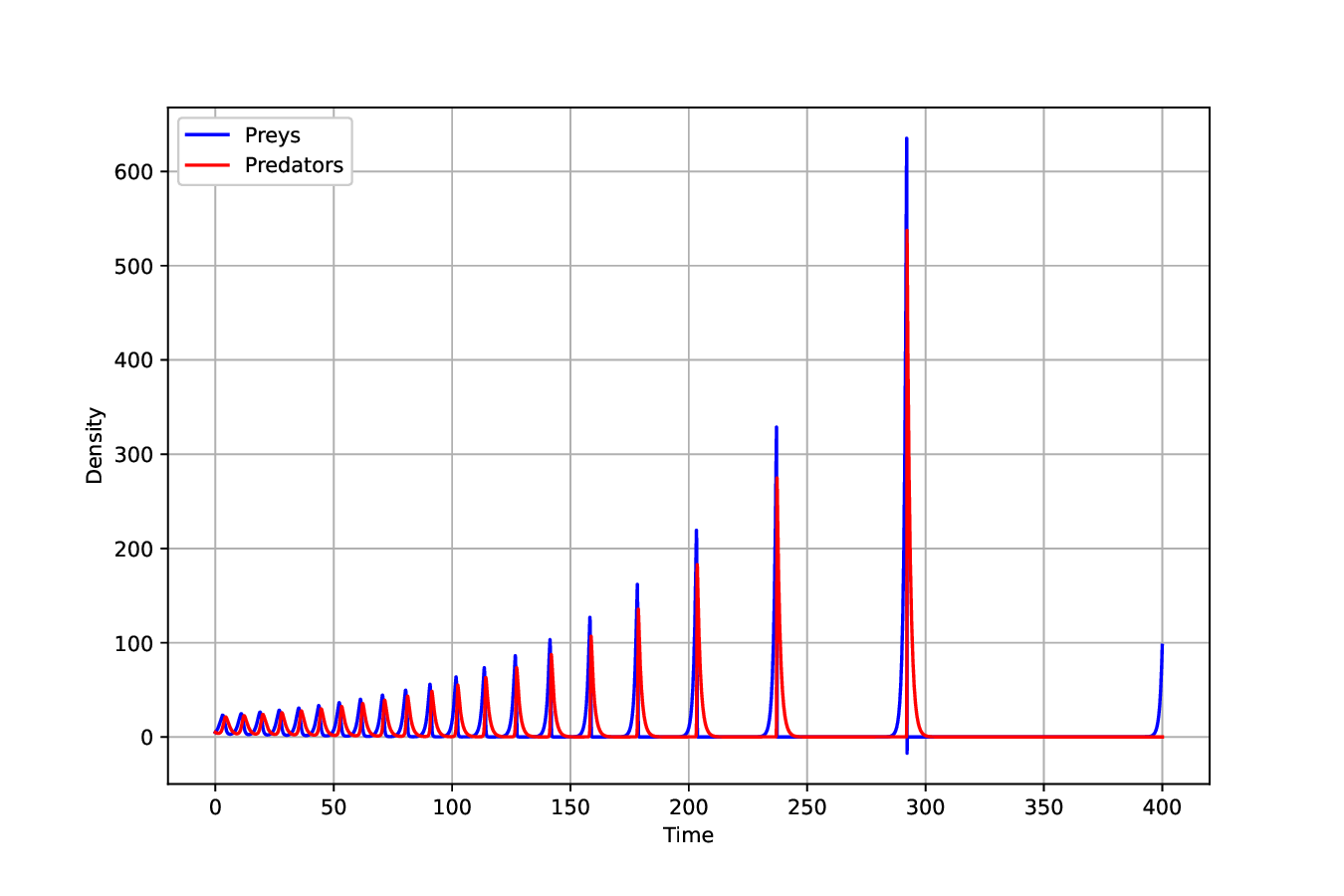} 
\caption{Oscillations of prey and predator densities for system (\ref{LST:euler}) 
with $\alpha = 1$, $\beta = 0.1$, $\gamma = 0.075$, $\delta = 0.75$, 
$h = 0.03$, and $x_0 = y_0 = 5$.}
\label{LST:oscilacoes_euler_negativo}
\end{figure}


\section{Discretization by Mickens' Method}
\label{LST:section03}

As seen in Section~\ref{LST:section02}, the progressive Euler method, 
when applied to the classical Lotka--Volterra system \eqref{LST:lotka_volterra}, 
has the particularity of losing the periodic\index{periodic} solutions, 
which correspond to closed curves in the phase space. In \cite{LST:mickens:lotka}, 
Mickens points out that the most likely reason for the loss of periodic\index{periodic} 
solutions is the fact that the Lotka--Volterra system is not structurally stable, 
i.e., a small perturbation in the equations of the system may change 
its topological properties. In particular, it can change the closed curves 
into ones that can spiral\index{spiral} into or out 
of the fixed point.\index{fixed point} It is known that 
the application of most classical numerical methods to a system 
with periodic solutions, transforms the original system into a very close one 
whose trajectories are not closed \cite{LST:sanz}. 

Here we intend to demonstrate that a nonstandard finite difference scheme, 
as generated according to the rules suggested by Mickens 
\cite{LST:mickens:paper},\index{Mickens' discretization} can be applied 
consistently to a structurally unstable dynamical system such 
as the one of Lotka--Volterra. In addition to proving that this scheme 
preserves the periodic solutions, it is also ensured that 
the positivity of the system is kept unchanged.

In \cite{LST:mickens:lotka}, Mickens suggests a 
discretization\index{Mickens' discretization} 
of the Lotka--Volterra model where, for simplicity, 
it is considered that all parameters -- 
$\alpha$, $\beta$, $\delta$, and $\gamma$ -- are equal to one. 
Here, the same strategy suggested by Mickens is followed, 
with the difference that the parameters are general,
assuming any value in $\mathbb{R}_+$. 

Following the rules stated by Mickens, 
the first-order derivatives are approximated by 
\begin{equation*}
\dot{x} \rightarrow \frac{x_{i+1} - x_i}{\phi}
\end{equation*}
and 
\begin{equation*}
\dot{y} \rightarrow \frac{y_{i+1} - y_i}{\phi},
\end{equation*}
where in both cases $\phi$ is such that 
$\phi(h) = h + \mathcal{O}(h^2)$.

Starting with the first equation of system (\ref{LST:lotka_volterra}), 
the linear and nonlinear terms are all substituted by nonlocal 
forms given by 
\begin{gather*}
\alpha x = 2\alpha x - x \rightarrow 2\alpha x_i - \alpha x_{i+1},\\ 
-\beta x y \rightarrow -\beta x_{i+1}y_i.
\end{gather*}
Thus, through the above substitutions, the first equation 
of system (\ref{LST:lotka_volterra}) can be rewritten as
\begin{equation*}
\frac{x_{i+1} - x_i}{\phi} 
= 2\alpha x_i - \alpha x_{i+1} - \beta x_{i+1}y_i,
\end{equation*}
which is equivalent to  
\begin{equation}
\label{LST:xmickens}
x_{i+1} = \frac{x_i(2\alpha\phi + 1)}{1 + \alpha\phi + \beta\phi y_i}.
\end{equation}

Regarding the second equation of system (\ref{LST:lotka_volterra}), 
the following substitutions are proposed:
\begin{gather*}
\gamma x y = 2\gamma x y - \gamma x y 
\rightarrow 2\gamma x_{i+1}y_i - \gamma x_{i+1}y_{i+1}, \\
- \delta y \rightarrow -\delta y_{i+1}.
\end{gather*}

Applying the two substitutions above, 
the second equation of the system is defined as
\begin{equation*}
\frac{y_{i+1} - y_i}{\phi} 
= 2\gamma x_{i+1}y_i - \gamma x_{i+1}y_{i+1} - \delta y_{i+1},
\end{equation*}
which is equivalent to
\begin{equation}
\label{LST:ymickens} 
y_{i+1} = \frac{y_i(2\gamma\phi x_{i+1} + 1)}{1 
+ \gamma \phi x_{i+1} + \delta\phi}. 
\end{equation}

Substituting (\ref{LST:xmickens}) into (\ref{LST:ymickens}), 
and joining both equations, we obtain the Lotka--Volterra model 
discretized by the Mickens method\index{Mickens' discretization} as
\begin{equation}
\label{LST:mickens}
\begin{cases}
x_{i+1} = \displaystyle \frac{x_i(2\alpha\phi 
+ 1)}{1 + \alpha\phi + \beta\phi y_i},\\[0.3cm]
y_{i+1} = \displaystyle \frac{2\gamma\phi x_i y_i(2\alpha\phi + 1) 
+ y_i(1 + \alpha\phi + \beta\phi y_i)}{(1 + \delta\phi)(1 + \alpha\phi 
+ \beta\phi y_i) + \gamma\phi x_i(2\alpha\phi + 1)},
\end{cases}
\end{equation}
which, as we shall show next, recovers the periodic\index{periodic} solutions 
and ensure that the positivity property of the Lotka--Volterra system 
is maintained. In concrete, through a simple analysis of the equations 
of system (\ref{LST:mickens}), it is clear that the Mickens method 
guarantees that the positivity property is maintained. Indeed, by choosing 
$(x_0,y_0) \in \mathbb{R}^2_+$, and as a consequence of all the parameters 
being positive, it is impossible to have negative values for any of the variables, 
since both equations will be quotients of strictly positive quantities. 

The fixed points\index{fixed point} of system (\ref{LST:mickens}) coincide 
with the ones of Sections~\ref{LST:section01} and \ref{LST:section02}: 
$p_1 = (0,0)$ and 
$p_2 = \left(\frac{\delta}{\gamma}, \frac{\alpha}{\beta}\right)$. 

Given the complexity of the system \eqref{LST:mickens} under study, we make use of 
the free open-source mathematics software system \textsf{SageMath}\index{\textsf{SageMath}} 
\cite{LST:sage} to analyze the nature of each one of the fixed points.\index{fixed point} 
For this purpose, we start by computing the Jacobian\index{Jacobian} matrix 
of the system (\ref{LST:mickens}) in an arbitrary point $(x,y)$. This matrix is given by 
\begin{equation}
\label{LST:jacobian_mickens}
Jf_{(x,y)} = 
\begin{pmatrix}
a & b \\
c & d
\end{pmatrix},
\end{equation} 
where
\begin{equation*}
\begin{aligned}
a =& \frac{2 \alpha \phi + 1}{\beta \phi y + \alpha \phi + 1},\\
b =& -\frac{{\left(2 \alpha \phi + 1\right)} 
\beta \phi x}{{\left(\beta \phi y + \alpha \phi + 1\right)}^{2}},\\
c =& \frac{2 {\left(2 \alpha \phi + 1\right)} \gamma 
\phi y}{{\left(2 \alpha \phi + 1\right)} \gamma \phi x 
+ {\left(\beta \phi y + \alpha \phi + 1\right)} {\left(\delta \phi + 1\right)}} \\
& -  \frac{{\left(2 {\left(2 \alpha \phi + 1\right)} \gamma \phi x y 
+ {\left(\beta \phi y + \alpha \phi + 1\right)} y\right)} {\left(2 \alpha \phi 
+ 1\right)} \gamma \phi}{{\left({\left(2 \alpha \phi + 1\right)} \gamma \phi x 
+ {\left(\beta \phi y + \alpha \phi + 1\right)} {\left(\delta \phi 
+ 1\right)}\right)}^{2}},\\
d =& -\frac{{\left(2 {\left(2 \alpha \phi + 1\right)} \gamma \phi x y 
+ {\left(\beta \phi y + \alpha \phi + 1\right)} y\right)} {\left(\delta \phi 
+ 1\right)} \beta \phi}{{\left({\left(2 \alpha \phi + 1\right)} \gamma \phi x 
+ {\left(\beta \phi y + \alpha \phi + 1\right)} {\left(\delta \phi 
+ 1\right)}\right)}^{2}} \\
&+ \frac{2 {\left(2 \alpha \phi + 1\right)} \gamma \phi x + 2 \beta \phi y 
+ \alpha \phi + 1}{{\left(2 \alpha \phi + 1\right)} \gamma \phi x 
+ {\left(\beta \phi y + \alpha \phi + 1\right)} {\left(\delta \phi + 1\right)}}.
\end{aligned}
\end{equation*}

\begin{theorem}
The fixed point\index{fixed point} $(0,0)$ 
of system \eqref{LST:mickens}
is a saddle point.\index{saddle point}
\end{theorem}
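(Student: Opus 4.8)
The plan is to evaluate the Jacobian matrix \eqref{LST:jacobian_mickens} at the fixed point $(0,0)$, compute its eigenvalues, and show that one has modulus less than one while the other has modulus greater than one, which is precisely the definition of a saddle point for a discrete dynamical system. First I would substitute $x = 0$ and $y = 0$ into the expressions for $a$, $b$, $c$, and $d$. The entry $b$ vanishes immediately since it carries a factor of $x$, and likewise the first term of $c$ vanishes (factor $y$) while its second term vanishes as well (it is a multiple of $y$ at the front). The second term of $d$ also vanishes after setting $x=y=0$ its surviving numerator factor is a multiple of $y$. Thus the Jacobian at the origin is upper (in fact diagonal) triangular, with
\begin{equation*}
Jf_{(0,0)} =
\begin{pmatrix}
\dfrac{2\alpha\phi + 1}{\alpha\phi + 1} & 0 \\[0.35cm]
0 & \dfrac{\alpha\phi + 1 + \delta\phi \cdot 0}{\,1+\delta\phi\,}
\end{pmatrix}
=
\begin{pmatrix}
\dfrac{2\alpha\phi + 1}{\alpha\phi + 1} & 0 \\[0.35cm]
0 & \dfrac{1}{1+\delta\phi}
\end{pmatrix},
\end{equation*}
so the eigenvalues read off the diagonal as $\lambda_1 = \frac{2\alpha\phi+1}{\alpha\phi+1}$ and $\lambda_2 = \frac{1}{1+\delta\phi}$.

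Next I would verify the modulus conditions. Since all parameters and $\phi$ are strictly positive, $\lambda_1 = 1 + \frac{\alpha\phi}{\alpha\phi + 1} > 1$, so $\lvert\lambda_1\rvert > 1$. Meanwhile $\lambda_2 = \frac{1}{1+\delta\phi}$ is a positive quantity strictly less than $1$ because $\delta\phi > 0$, hence $\lvert\lambda_2\rvert < 1$. Having one eigenvalue inside and one outside the unit circle, the fixed point $(0,0)$ is a saddle point for \eqref{LST:mickens}. I would remark that this matches the continuous-time behavior from Section~\ref{LST:section01}, where $(0,0)$ is also a saddle, and unlike Theorem~\ref{LST:thm:01} there is no threshold on $h$: the Mickens scheme keeps $(0,0)$ a saddle for every admissible step size.

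The main obstacle is purely bookkeeping: correctly simplifying the bulky entries $c$ and $d$ of \eqref{LST:jacobian_mickens} at $(0,0)$, in particular checking that every term that ought to vanish indeed carries an explicit factor of $x$ or $y$, and that the surviving part of $d$ collapses to $\frac{1}{1+\delta\phi}$ rather than something more complicated. Since \textsf{SageMath} produced these expressions, I would cross-check the specialization symbolically, but the argument itself requires no subtlety beyond that. One could also avoid touching the general Jacobian altogether by noting that near $(0,0)$ the map \eqref{LST:mickens} decouples to leading order — $x_{i+1} \approx \frac{2\alpha\phi+1}{\alpha\phi+1}\,x_i$ and $y_{i+1} \approx \frac{1}{1+\delta\phi}\,y_i$ — giving the same two multipliers directly; I would present whichever route is cleaner, likely the direct specialization of the Jacobian for consistency with the later theorems in this section.
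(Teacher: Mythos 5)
Your proposal is correct and follows essentially the same route as the paper: evaluate the Jacobian \eqref{LST:jacobian_mickens} at $(0,0)$, obtain the diagonal matrix with entries $\frac{2\alpha\phi+1}{\alpha\phi+1}$ and $\frac{1}{1+\delta\phi}$, and conclude from $\lvert\lambda_1\rvert>1$ and $\lvert\lambda_2\rvert<1$ that the origin is a saddle. The only additions you make (the leading-order decoupling remark and the comparison with Theorem~\ref{LST:thm:01}) are harmless commentary, not a different argument.
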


\begin{proof}
The Jacobian\index{Jacobian} matrix (\ref{LST:jacobian_mickens}) 
evaluated at the fixed point\index{fixed point} $(0,0)$ is
\begin{equation*}
Jf_{(0,0)} =
\begin{pmatrix}
\dfrac{2 \alpha \phi + 1}{\alpha \phi + 1} & 0 \\
0 & \dfrac{1}{\delta \phi + 1}
\end{pmatrix}, 
\end{equation*}
whose eigenvalues are $\lambda_1 = \dfrac{1}{\delta\phi + 1}$ 
and $\lambda_2 = \dfrac{2 \alpha \phi + 1}{\alpha \phi + 1}$. 
From these results, it is possible to draw the following conclusions:
\begin{itemize}
\item Since $\delta,\phi > 0$, it follows that $\delta\phi + 1 > 1$. 
Thus, $\lambda_1$ is always less than one, regardless of the values of 
$\delta$ and $\phi$. Moreover, by the positivity of 
the parameters, it is clear that $\lambda_1$ is always greater than zero. 
Thereby, $\lvert \lambda_1\rvert < 1$.
\item On the other hand, since $\alpha,\phi > 0 $, then $\alpha\phi + 1$ 
is always less than $2\alpha\phi + 1$. For this reason, $\lambda_2 > 1$, 
which leads to $\lvert \lambda_2 \rvert > 1$. 
\end{itemize}
Thus, $p_1$ is a saddle point\index{saddle point} and, therefore, unstable.
\end{proof}

In contrast, the Jacobian\index{Jacobian} matrix (\ref{LST:jacobian_cont}) 
evaluated at the coexistence equilibrium point $p_2$ is 
\begin{equation*}
Jf_{\left(\frac{\delta}{\gamma},\frac{\alpha}{\beta}\right)} 
=\begin{pmatrix}
1 & -\dfrac{\beta \delta \phi}{{\left(2 \alpha \phi + 1\right)} \gamma} \\
\dfrac{\alpha \gamma \phi}{2 \beta \delta \phi + \beta} 
& \dfrac{3 \alpha \delta \phi^{2} + 2 {\left(\alpha 
+ \delta\right)} \phi + 1}{4 \alpha \delta \phi^{2} 
+ 2 {\left(\alpha + \delta\right)} \phi + 1}
\end{pmatrix},
\end{equation*}
whose eigenvalues are complex conjugates
\begin{equation*}
\lambda = \frac{7 \alpha \delta \phi^{2} + 4 {\left(\alpha 
+ \delta\right)}\phi + 2 \pm i\phi \sqrt{\displaystyle 15 
\alpha^{2} \delta^{2} \phi^{2} + 4 \alpha \delta 
+ 8 {\left(\alpha^{2} \delta + \alpha \delta^{2}\right)} 
\phi}}{2 {\left(4 \alpha \delta \phi^{2} 
+ 2 {\left(\alpha + \delta\right)} \phi + 1\right)}}.
\end{equation*}
With the help of \textsf{SageMath}\index{\textsf{SageMath}}, 
it is easily verified that $|\lambda| = 1$, which means that 
the point $p_2$ is a center point in the linearized system, 
while nothing can be concluded regarding the stability\index{stability} 
for the nonlinear system at this equilibrium. However, it is possible 
to verify numerically that the orbits\index{orbit} are periodic,\index{periodic} 
corresponding to closed curves in the phase space, meaning that, 
at least for the indicated parameter values, $p_2$ is, in fact, a center. 
This effect can be seen in Figures~\ref{LST:vs_presas} and \ref{LST:vs_predadores}, 
which simultaneously show the results obtained here
and those obtained in the continuous case.

\begin{figure}[ht!]
\centering
\includegraphics[scale=0.45]{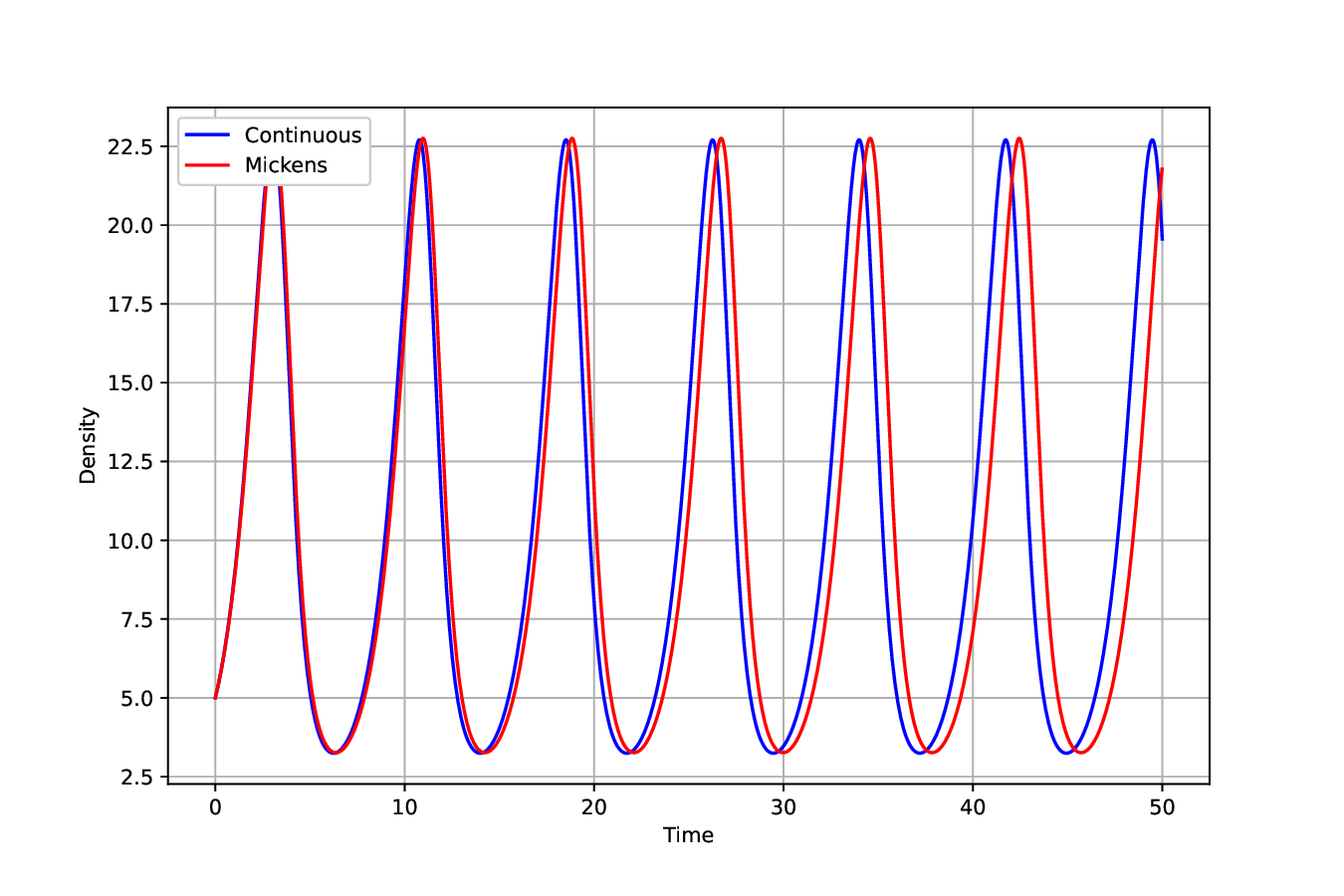}\\
\caption{Oscillations of preys for the system (\ref{LST:lotka_volterra}) 
versus system (\ref{LST:mickens}) with $\alpha = 1$, $\beta = 0.1$, 
$\gamma = 0.075$, and $\delta = 0.75$.}
\label{LST:vs_presas}
\end{figure}

\begin{figure}[ht!]
\centering
\includegraphics[scale=0.45]{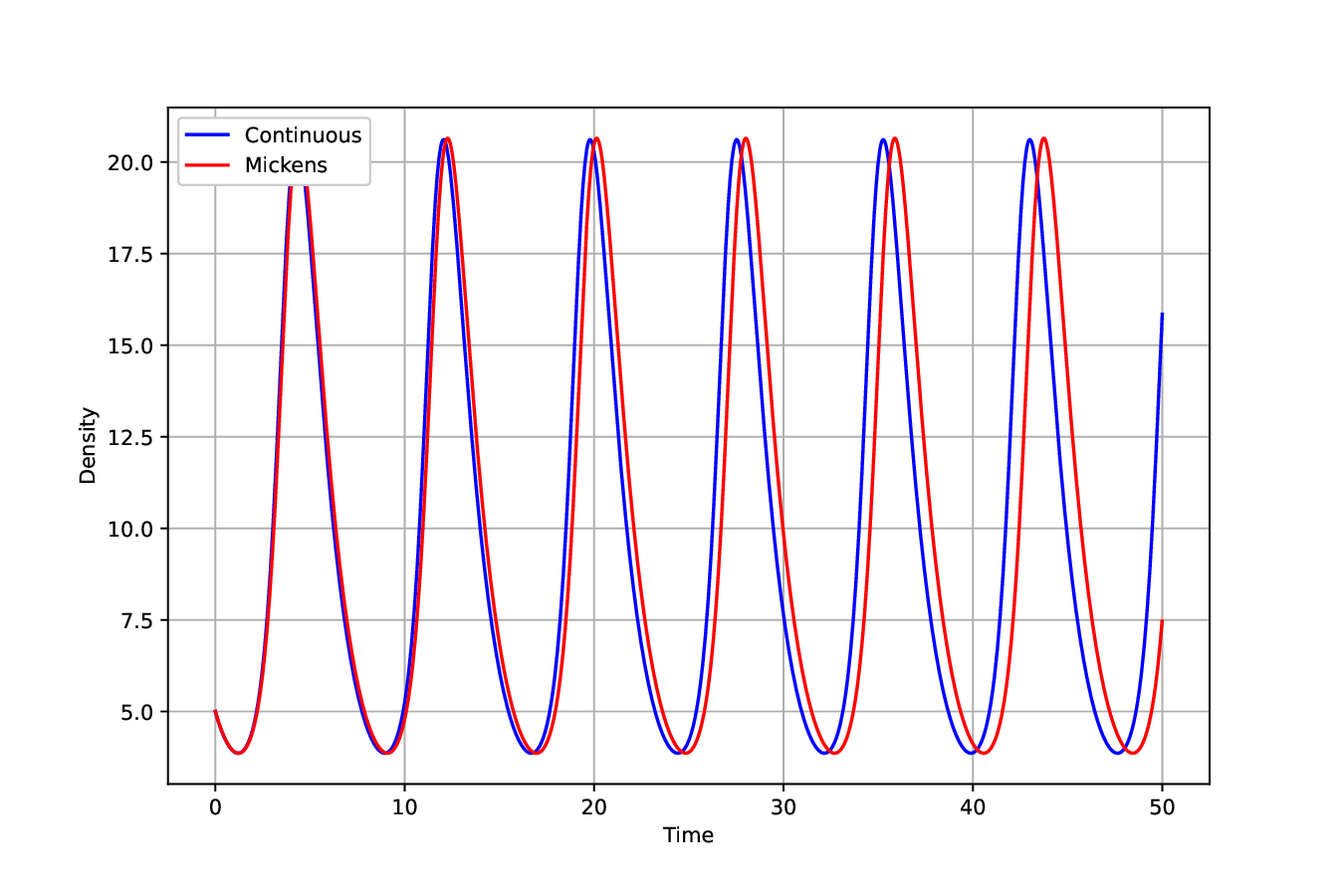}\\
\caption{Oscillations of predators for the system (\ref{LST:lotka_volterra}) 
versus system (\ref{LST:mickens}) with $\alpha = 1$, $\beta = 0.1$, 
$\gamma = 0.075$, and $\delta = 0.75$.}
\label{LST:vs_predadores}
\end{figure}

Despite the fact that one of Mickens' rules mention that a more complex expression 
should be used for the step function $\phi(h)$, it appears that all the results achieved 
are valid regardless of the expression used for $\phi(h)$. Accordingly, in our 
simulations we chose the simplest function given by $\phi(h) = h$. In particular, 
a step size given by $h = 0.01$ was considered. In both Figures~\ref{LST:vs_presas} and 
\ref{LST:vs_predadores}, it is observed that the periodic\index{periodic} 
oscillations\index{oscillations} of the discrete system practically overlap those 
of the original continuous system. Logically, the smaller the value of the chosen step $h$, 
the more superimposed the curves for each of the systems will be.

To complete the analysis of system (\ref{LST:mickens}), 
we end by proving that Theorem~\ref{LST:trajectory_euler} 
remains valid in this case, from which one can conclude that 
the direction of the trajectories of the Mickens' discrete system 
will continue to be counterclockwise.

Consider the first equation of system (\ref{LST:mickens}). 
Equivalently, one can write that
\begin{equation*}
\frac{x_{i+1}}{x_i} 
= \frac{2\alpha\phi + 1}{1 + \alpha\phi + \beta\phi y_i}.
\end{equation*}
\begin{itemize}
\item Let $y_i$ be a value that either belongs to regions I or II 
considered in Figure~\ref{LST:regions}. Then, we have 
$y_i > \frac{\alpha}{\beta}$. For this reason, 
\begin{equation*}
\alpha\phi < \beta\phi y_i \Rightarrow 2\alpha\phi 
< \alpha\phi + \beta\phi y_i.
\end{equation*}
Thus, 
\begin{equation*}
2\alpha\phi + 1 < 1 + \alpha\phi + \beta\phi y_i 
\Rightarrow \frac{x_{i+1}}{x_i} < 1 \Leftrightarrow x_{i+1} < x_i.
\end{equation*}
\item Now, let $y_i$ be a value that either belongs to regions III or IV. 
In both cases, we have $y_i < \frac{\alpha}{\beta}$, which leads to 
\begin{equation*}
\alpha\phi > \beta\phi y_i 
\Rightarrow 2\alpha\phi > \alpha\phi + \beta\phi y_i.
\end{equation*}  
Finally,
\begin{equation*}
2\alpha\phi + 1 > 1 + \alpha\phi + \beta\phi y_i 
\Rightarrow \frac{x_{i+1}}{x_i} > 1 \Leftrightarrow x_{i+1} > x_i.
\end{equation*}
\end{itemize} 

We now consider equation (\ref{LST:ymickens}) that is equivalent to
\begin{equation*}
\frac{y_{i+1}}{y_i} = \frac{2\gamma\phi x_{i+1} + 1}{1 
+ \gamma\phi x_{i+1} + \delta\phi}.
\end{equation*}
\begin{itemize}
\item Let $x_{i+1}$ be a value that either belongs to regions II or III 
of Figure~\ref{LST:regions}. In there we have $x_{i+1} < \frac{\delta}{\gamma}$, 
which is equivalent to $\delta > \gamma x_{i+1}$. In this way,
\begin{equation*}
\delta\phi > \gamma\phi x_{i+1} 
\Rightarrow \gamma\phi x_{i+1} + \delta\phi > 2\gamma\phi x_{i+1}.
\end{equation*}
Therefore,
\begin{equation*}
2\gamma\phi x_{i+1} + 1 < 1 + \gamma\phi x_{i+1} + \delta\phi 
\Rightarrow \frac{y_{i+1}}{y_i} < 1 \Leftrightarrow y_{i+1} < y_i.
\end{equation*}
\item On the other hand, let $x_{i+1}$ be a value that either belongs 
to regions I or IV. In this case, $x_{i+1} > \frac{\delta}{\gamma}$, 
which means that $\delta < \gamma x_{i+1}$, and we obtain
\begin{equation*}
\delta\phi < \gamma\phi x_{i+1} 
\Rightarrow \gamma\phi x_{i+1} + \delta\phi 
< 2\gamma\phi x_{i+1}.
\end{equation*}
Finally,
\begin{equation*}
2\gamma\phi x_{i+1} + 1 > 1 + \gamma\phi x_{i+1} + \delta\phi 
\Rightarrow \frac{y_{i+1}}{y_i} > 1 \Leftrightarrow y_{i+1} > y_i.
\end{equation*}
\end{itemize}

We conclude that Theorem~\ref{LST:trajectory_euler}	also holds
for system \eqref{LST:mickens}.


\section{Conclusion}

In this work our goal was to show that the choice of the numerical method 
for the discretization of a continuous dynamical system is crucial in order 
to obtain consistent results. It was proved that the progressive Euler 
method,\index{Euler's discretization} although appealing for its simplicity, 
is not able to deal with structurally unstable systems, making the solutions
of the classical Lotka--Volterra model, that should be closed curves 
in phase space, become spirals.\index{spiral} 
Furthermore, Euler's discretization does not take into account special 
fundamental properties of the systems, such as positivity. On the other hand, 
Mickens' method,\index{Mickens' discretization} despite generating an apparently 
more complex system, manages to guarantee that the qualitative behavior 
of the system, in a neighborhood of the fixed points,\index{fixed point} 
is identical to the one found in its continuous counterpart. Additionally, 
this method takes into account basic rules so that positivity is never compromised. 

Our conclusions open the possibility of applying 
Mickens' method\index{Mickens' discretization} to other structurally 
unstable dynamical systems\index{dynamical systems}  
of particular interest, recovering properties that may 
have been lost through different standard discretizations.
We also concluded that the Computer Algebra System 
\textsf{SageMath}\index{\textsf{SageMath}} 
is a strong tool that allows to do computations in a reliable way, 
serving as a good support when the systems under study are complex. 
In addition, it produces numerical simulations of good quality 
and in a very simple way. All the figures were generated
with \textsf{SageMath}\index{\textsf{SageMath}}.


\section*{Acknowledgments}

The authors were partially supported by 
the Portuguese Foundation for Science and Technology (FCT)
through the Center for Research and Development in Mathematics 
and Applications (CIDMA), projects UIDB/04106/2020 and UIDP/04106/2020.


\renewcommand{\bibname}{References}


\end{document}